 \newtheorem{thm}{Theorem}[section]
 \newtheorem{cor}[thm]{Corollary}
 \newtheorem{lem}[thm]{Lemma}
 \theoremstyle{definition}
 \theoremstyle{remark}
 \newtheorem*{ex}{Example}
 \numberwithin{equation}{section}
\newcommand{\p}{\varphi}
\newcommand{\mS}{\mathcal S}
\newcommand{\Z}{\mathbb Z}
\newcommand{\N}{\mathbb N}
\newcommand{\bR}{\mathbb R}
\newcommand{\R}{\mathbb R}
\newcommand{\mpS}{\mathcal{S}_{\on{mp}}}
\newcommand{\IMS}{M_\mathcal{S}}
\newcommand{\ve}{\varepsilon}
\newcommand{\on}{\operatorname}
\newcommand{\vp}{\varphi}
\begin{document}

%-------------------------------------------------------------------------
% editorial commands: to be inserted by the editorial office
%
%\firstpage{1} \volume{228} \Copyrightyear{2004} \DOI{003-0001}
%
%
%\seriesextra{Just an add-on}
%\seriesextraline{This is the Concrete Title of this Book\br H.E. R and S.T.C. W, Eds.}
%
% for journals:
%
%\firstpage{1}
%\issuenumber{1}
%\Volumeandyear{1 (2004)}
%\Copyrightyear{2004}
%\DOI{003-xxxx-y}
%\Signet
%\commby{inhouse}
%\submitted{March 14, 2003}
%\received{March 16, 2000}
%\revised{June 1, 2000}
%\accepted{July 22, 2000}
%
%
%
%---------------------------------------------------------------------------
%Insert here the title, affiliations and abstract:
%

\title[Contractivity of idempotent Markov operators]
 {Existence of invariant idempotent measures by contractivity of idempotent Markov operators}

%----------Author 1

%\thanks{This work was completed with the support of our
%\TeX-pert.}
%----------Author 2
\author[Rudnei D. da Cunha]{Rudnei D. da Cunha}
\address{Instituto de Matem\'atica e Estat\'istica - UFRGS, Av. Bento Gon\c calves 9500, 91500-900 Porto Alegre - RS - Brazil}
\email{rudnei.cunha@ufrgs.br}

\author[Elismar R. Oliveira]{Elismar R. Oliveira}
\address{Instituto de Matem\'atica e Estat\'istica - UFRGS, Av. Bento Gon\c calves 9500, 91500-900 Porto Alegre - RS - Brazil}
\email{elismar.oliveira@ufrgs.br}

\author[Filip Strobin]{Filip Strobin}
\address{%
Lodz University of Technology, W\'olcza\'nska 215, 90-924 {\L}\'od\'z, Poland}
\email{filip.strobin@p.lodz.pl}
%----------classification, keywords, date
\subjclass{Primary {28A80, 28A33, 37M25; Secondary 37C70, 54E35}}

\keywords{Invariant idempotent measures, iterated function systems, {atractors}, idempotent Markov operators}

\date{September 23, 2021}
%----------additions
%\dedicatory{To my boss}
%%% ----------------------------------------------------------------------

\begin{abstract}
We prove that the idempotent Markov operator generated by contractive max plus normalized iterated function system (IFS) is also a contractive map w.r.t. natural metrics on the space of idempotent measures. This gives alternative proofs of the existence of invariant idempotent measures for such IFSs.
\end{abstract}

%%% ----------------------------------------------------------------------
\maketitle
%%% ----------------------------------------------------------------------
%\tableofcontents
\section{Introduction}
$\;$\\
The idempotent analysis, that was introduced by  Maslov and his collaborators \cite{Lit02} and \cite{Lit07}, brought the notion of {an} idempotent (or Maslov) measure
with important applications in many areas of mathematics, such as optimization, mathematical morphology and game theory. It can be considered as a non additive integration theory built over a max-plus semiring.

The natural question of the existence of an idempotent version of the Hutchinson-Barnsley {theory} was considered recently by Mazurenko and Za\-ri\-chnyi in \cite{MZ} and by the authors in \cite{COS}. It turned out {that it is possible to associate in a {``}reasonable way" an invariant idempotent measure to each contractive IFS $\mS$}. In a natural way there can be defined a counterpart of  the Markov operator $M_\mS$ acting on the space of idempotent measures $I(X)$ (where $X$ is a complete or compact metric space), and the invariant idempotent measure is exactly the contractive fixed point (w.r.t. the canonical {pointwise convergence} topology $\tau_p$ on $I(X)$) of it.

The proof presented in \cite{MZ} is topological and does not base on the possible contractiveness of $\IMS$ w.r.t. some metric on $I(X)$. In \cite{COS} we provided an alternative proof by defining a certain metric $d_\theta$, induced by an embedding of the space of idempotent measures into the space of fuzzy sets. The metric $d_\theta$ induces topology stronger than $\tau_p$ and such that $\IMS$ is contractive w.r.t. it. Thus its contractive fixed point is exactly the invariant idempotent measure.

In the present paper, we show that the operator $\IMS$ is contractive w.r.t. other natural metrics on the space $I(X)$ - the one defined by Zaitov in \cite{ZAI}, and a version of the one considered by Bazylevych, Repov\v{s} and Zarichnyi in \cite{Rep10}. This gives alternative proofs of the existence of invariant idempotent measures for such IFSs.

\section{Preliminaries}
In this section we will give basic definitions and facts concerning idempotent measures and iterated function systems (IFSs for short). Our notation is based on that given in our paper \cite{COS}, where much more details are given - here we present the material in a short way. We also refer the reader to papers \cite{Kol97}, \cite{Aki99}, \cite{MoDo99}, \cite{Kol88}, \cite{ZAI}, \cite{Rep10} or \cite{MZ} for brief expositions.

By the \emph{max-plus semiring} we will mean the triple $S=(\bR_{\max},\oplus,\odot)$, where {$\mathbb{R}_{{\max}}:=\mathbb{R} \cup \{-\infty\}$} and $x\oplus y:=\max\{x,y\}$ and $x\odot y:=x+y$ for $x,y\in\bR_{\max}$.

For a compact metric space $X$, by $C(X)$ we consider the space of continuous maps from $X$ to $\bR$.

A functional (not necessarily linear nor continuous) $\mu:C(X) \to \mathbb{R}$ satisfying
  \begin{enumerate}
    \item $\mu(\lambda)=\lambda$ for all $\lambda\in\R$ (normalization);
    \item $\mu(\lambda \odot \psi)=\lambda \odot \mu(\psi)$, for all $\lambda \in\mathbb{R}$ and $\psi \in C(X)$;
    \item $\mu(\varphi \oplus \psi)=\mu(\varphi) \oplus \mu(\psi)$, for all $\varphi, \psi \in C(X)$,
  \end{enumerate}
  is called an \emph{idempotent probability measure} (or \emph{Maslov measure}).

 By $I(X)$ we denote the family of all idempotent probability measures. Canonically, we endow $I(X)$ with the  pointwise convergence topology $\tau_p$, whose subbase consists of sets $V(\mu,\p,\ve):=\{\nu\in I(X):|\nu(\p)-\mu(\p)|<\ve\}$, where $\mu\in I(X),\;\p\in C(X)$  and $\ve>0$. Note that $I(X)$ is compact w.r.t. $\tau_p$ provided $X$ is compact (see for example \cite[Theorem 5.3]{Rep10}).

By the \emph{density} of an idempotent probability measure $\mu$ we will mean the unique upper semicontinuous (usc) map $\lambda_\mu:X\to\R_{\max}$ such that
 $\lambda_{\mu}(x)=0$ for some $x\in X$ and
$
\mu=\bigoplus_{x\in X}\lambda_{\mu}(x)\odot\delta_x,$ that is, for every $\vp\in C(X)$, we have
\begin{equation}\label{fil1}
\mu(\vp)=\bigoplus_{x\in X}\lambda_{\mu}(x)\odot\vp(x)=\max\{\lambda_{\mu}(x)+\vp(x):x\in X\}{.}
\end{equation}
Note that, conversely, each usc map $\lambda:X\to[-\infty,0]$ with $\lambda(x)=0$ for some $x\in X$, is the density of some idempotent measure.

An important notion is the support of an idempotent measure (see, e.g., \cite{ZAI}, \cite{Zar}). We give here an equivalent formulation: For $\mu\in I(X)$, we set
$
\operatorname{supp} \mu:=\overline{\left\{x \in X: \lambda_\mu(x)>-\infty\right\}}.
$

{For another metric space $Y$ and a continuous map $\phi: X \to Y$, define the \emph{max-plus pushforward map} $I(\phi): I(X) \to I(Y)$ by
$$I(\phi)(\mu)(\varphi):=\mu( \varphi\circ \phi), \; \forall \varphi \in C(Y),$$
for any $\mu \in I(X)$.}

Finally, we are ready to define \emph{max-plus normalized} IFSs and invariant idempotent measures. For brief expositions, see \cite{BAR88}, \cite{HUT} and \cite{MZ}.

{By a } \emph{max-plus normalized IFS} {we will mean any triple $$\mpS=(X,(\phi_j)_{j=1}^L,(q_j)_{j=1}^L)$$ such that $(X,d)$ is a complete metric space, $\phi_j$, $j=1,...,L$, are continuous selfmaps of $X$ and $(q_j)_{j=1}^L$ is a family of real numbers so that $\max\{q_j:j=1,...,L\}=0$.

The map $M_{\mathcal{S}}:I(X)\to I(X)$, defined by
\begin{equation*}
\forall_{\mu\in I(X)}\;M_{\mathcal{S}}(\mu):=\bigoplus_{j=1}^Lq_j\odot (I(\phi_j)(\mu))
\end{equation*}
will be called }\emph{the idempotent Markov operator} {generated by $\mS$.

By  }the \emph{invariant idempotent measure} {of {$\mpS$} we mean the unique measure $\mu_{\mathcal{S}}\in I(X)$ which is a fixed point of $M_\mS$ and for every $\mu\in I(X)$, the sequence of iterates $M^{(n)}_{\mathcal{S}}(\mu)$ converges to $\mu_{\mathcal{S}}$ with respect to the topology $\tau_p$ on $I(X)$.

Finally, we say that $\mpS$ is:\\
- }\emph{Banach contractive},{ if the Lipschitz constants $\on{Lip}(\phi_j)<1$ for $j=1,...,L$.\\
- }\emph{Matkowski contractive},{ if each map $\phi_j$ is a Matkowski contraction, that is, there exists a nondecreasing map $\varphi_j:[0, \infty)\to[0,\infty)$ (called as a witness for $\phi_j$) such that $\lim_{n\to\infty}\varphi_j^{(n)}(t)= 0$ for any $t>0$, and
\begin{equation}\label{1}
\forall_{x,y\in X}\;d(\phi_j(x),\phi_j(y))\leq\varphi_j(d(x,y)).
\end{equation}}
Note that for compact space $X$, the map $\phi:X\to X$ is a Matkowski contraction if, and only if, $d(\phi(x),\phi(y))<d(x,y)$ for $x\neq y$. Despite the fact that in main results we are interested in compact spaces, we will use condition (\ref{1}) since it allows to make use of concrete witness in computations. We refer the reader to \cite{JJ07} for a survey on different contractive conditions.

As mentioned in the introduction, recently it has been proved that each Banach (in \cite{MZ}) or Matkowski (in \cite{COS}) contractive max-plus normalized IFS $\mpS$ on a compact space generates the invariant idempotent measure. The proof in \cite{MZ}  is rather topological and does not involve contractivity of $M_\mS$ w.r.t. some metric. On the other hand, in \cite{COS} we gave an alternative proof by defining a certain metric $d_\theta$ on $I(X)$ and showing that $M_\mS$ is contractive w.r.t. this metric.  The main results of our paper show that $M_\mS$ is Matkowski or Banach contractive w.r.t. other natural metrics on $I(X)$.\\
Finally, let us note that in \cite{MZ} the existence of the invariant idempotent measure has been established for contractive max-plus normalized IFSs on complete spaces. However, the proof presented there is given for compact spaces, and then the result is lifted to all complete spaces by standard properties of contractive IFSs. Moreover, in \cite{MZ2} the existence of invariant idempotent measures is established via contractivity of $M_{\mS}$ w.r.t. a certain metric, but only for ultrametric spaces.

\section{Contractivity w.r.t. Zaitov's metric $d_1$}
Throughout the rest of this section, we assume that $(X,d)$ is a compact metric space.
As mentioned earlier, in \cite{ZAI} there was defined a metric $d_I$ on $I(X)$ that generates the topology $\tau_p$. Its definition  is complicated, but in a natural way there can be defined a metric $d_1$ on $I(X)$ so that $d_I\leq d_1$ and in particular, the topology induced by $d_1$ is finer than $\tau_p$. The metric $d_1$ is defined as follows: for $\mu_1,\mu_2\in I(X)$, set
$$
d_{1}\left(\mu_{1}, \mu_{2}\right):=\inf \left\{\sup \{d(x, y):(x, y) \in \operatorname{supp} \xi\}: \xi \in \Lambda_{\mu_1,\mu_2}\right\}
$$ where
$\Lambda_{\mu_1,\mu_2}$ is the family of all idempotent measures $\xi\in I(X\times X)$ with $I\left(\pi_{i}\right)(\xi)=\mu_{i}, i=1,2$, and where $\pi_i$, $i=1,2$, are natural projections of $X\times X$ onto $X$.

We start with a technical lemma that will be useful later on.
\begin{lem}\label{newlemma1}
Let $(\mu_n),(\nu_n)\subset I(X)$ be $\tau_p$-convergent sequences and $\mu,\nu\in I(X)$ be their limits. Let $(\xi_n)\subset I(X^2)$ be such that $\xi_n\in\Lambda_{\mu_n,\nu_n}$ for every $n\in\N$. Then there exists $\xi\in \Lambda_{\mu,\nu}$ such that
$$
\sup\{d(x,y):(x,y)\in\on{supp}(\xi)\}\leq \liminf_{n\to\infty}\sup\{d(x,y):(x,y)\in\on{supp}(\xi_n)\}.
$$
\end{lem}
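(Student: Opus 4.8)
The plan is to produce the required $\xi$ as a subsequential $\tau_p$-limit of the $\xi_n$, exploiting compactness, and then to verify the two things the statement demands: that $\xi$ is a coupling of $\mu$ and $\nu$, and that its support does not spread out more than the $\liminf$ of the supports of the $\xi_n$. Write $s_n := \sup\{d(x,y):(x,y)\in\on{supp}(\xi_n)\}$ and $L := \liminf_{n} s_n$; since $X$ is compact, $d$ is bounded and $L \le \on{diam}(X) < \infty$. First I would pass to a subsequence along which $s_{n_k} \to L$. Since $X^2$ is compact, $(I(X^2),\tau_p)$ is compact, and because $\tau_p$ is metrizable (e.g.\ by Zaitov's metric $d_I$ on $X^2$) it is sequentially compact; hence, after passing to a further subsequence and relabelling, I may assume $\xi_{n_k}\to\xi$ in $\tau_p$ for some $\xi\in I(X^2)$, while still $s_{n_k}\to L$.

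The coupling property $\xi\in\Lambda_{\mu,\nu}$ is the routine part. For fixed $\varphi\in C(X)$ the map $\eta\mapsto I(\pi_1)(\eta)(\varphi)=\eta(\varphi\circ\pi_1)$ is just evaluation at the fixed function $\varphi\circ\pi_1\in C(X^2)$, hence $\tau_p$-continuous; as such evaluations generate $\tau_p$ on $I(X)$, the pushforward $I(\pi_1)\colon I(X^2)\to I(X)$ is $\tau_p$-continuous, and likewise $I(\pi_2)$. Therefore $I(\pi_1)(\xi_{n_k})\to I(\pi_1)(\xi)$; but $I(\pi_1)(\xi_{n_k})=\mu_{n_k}\to\mu$, so by Hausdorffness of $\tau_p$ and uniqueness of limits $I(\pi_1)(\xi)=\mu$, and symmetrically $I(\pi_2)(\xi)=\nu$. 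Thus $\xi\in\Lambda_{\mu,\nu}$.

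The substance of the lemma is the inequality $\sup\{d(x,y):(x,y)\in\on{supp}(\xi)\}\le L$, which is a lower-semicontinuity statement for the support under $\tau_p$-convergence; this is where I expect the main difficulty, since the support is governed by the density while $\tau_p$-convergence is phrased through test functions. I would argue by contradiction with a bump test function. Suppose some $(x_0,y_0)\in\on{supp}(\xi)$ has $d(x_0,y_0)>L$, and fix $\delta>0$ with $d(x_0,y_0)>L+2\delta$. The set $V:=\{(x,y):d(x,y)>L+\delta\}$ is open and is a neighbourhood of $(x_0,y_0)$; since $(x_0,y_0)\in\overline{\{\lambda_\xi>-\infty\}}$, there is $z^\ast\in V$ with $\lambda_\xi(z^\ast)=:c>-\infty$. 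On the other hand, from $s_{n_k}\to L$ we get $s_{n_k}<L+\delta$ for all large $k$, so $\on{supp}(\xi_{n_k})\cap V=\emptyset$ and hence $\lambda_{\xi_{n_k}}\equiv-\infty$ on $V$.

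Now fix $M>-c$ and, using Urysohn's lemma on the compact (hence normal) space $X^2$, choose $\varphi\in C(X^2)$ with $-M\le\varphi\le 0$, $\varphi(z^\ast)=0$, and $\varphi\equiv -M$ on $X^2\setminus V$. Reading off the two measures through \eqref{fil1}: on one hand $\xi(\varphi)\ge\lambda_\xi(z^\ast)+\varphi(z^\ast)=c$; on the other hand, for large $k$ the density $\lambda_{\xi_{n_k}}$ is $-\infty$ on $V$ and at most $0$ off $V$ where $\varphi=-M$, so $\xi_{n_k}(\varphi)\le -M$. Passing to the $\tau_p$-limit gives $\xi(\varphi)=\lim_k\xi_{n_k}(\varphi)\le -M<c$, contradicting $\xi(\varphi)\ge c$. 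Hence no such $(x_0,y_0)$ exists and $\sup\{d(x,y):(x,y)\in\on{supp}(\xi)\}\le L$, completing the proof. The one place demanding care is precisely this last step: translating the emptiness of $\on{supp}(\xi_{n_k})\cap V$ into a genuine upper bound on $\xi_{n_k}(\varphi)$ that survives the limit, which is exactly what the constant value $-M$ of $\varphi$ off $V$, together with $\lambda_{\xi_{n_k}}\le 0$, secures.
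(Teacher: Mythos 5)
Your proof is correct, and it follows the same overall skeleton as the paper's (pass to a subsequence realizing the $\liminf$, extract a $\tau_p$-convergent further subsequence by compactness of $I(X^2)$, verify $\xi\in\Lambda_{\mu,\nu}$ by continuity of the evaluations $\eta\mapsto\eta(\vp\circ\pi_i)$, then kill far-apart support points with a bump test function). The genuine difference is in the last step: the paper first passes to yet another subsequence along which the supports $\on{supp}(\xi_{k_n})$ converge in the hyperspace $\mathcal{K}(X^2)$ to a compact set $K$, proves $\on{supp}(\xi)\subset K$ by a Tietze-extension argument, and only then estimates $\sup\{d(x,y):(x,y)\in K\}$ by approximating points of $K$ with points of $\on{supp}(\xi_{k_n})$. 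You bypass the Hausdorff-metric machinery entirely by testing directly against the open set $V=\{d>L+\delta\}$, which is shorter and avoids one compactness extraction. Your version is also slightly more careful at the one delicate spot: you pick a point $z^\ast\in V$ with \emph{finite} density $\lambda_\xi(z^\ast)=c>-\infty$ (which exists since $V$ meets $\{\lambda_\xi>-\infty\}$), whereas the paper evaluates the density at the support point $s_0$ itself, where it could a priori be $-\infty$; the fix is exactly the perturbation you make. What the paper's longer route buys is the intermediate fact $\on{supp}(\xi)\subset K=\lim\on{supp}(\xi_{k_n})$, which is reused implicitly in spirit elsewhere, but it is not needed for the stated inequality.
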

\begin{proof}
First take a subsequence $(\xi_{k_n^{(1)}})$ so that
$$
\lim_{n\to\infty}\sup\{d(x,y):(x,y)\in\on{supp}(\xi_{k_n^{(1)}})\}=$$ $$=\liminf_{n\to\infty}\sup\{d(x,y):(x,y)\in\on{supp}(\xi_n)\}.
$$
Now since $(\on{supp}(\xi_{k_n^{(1)}}))$ is a sequence of compact sets in the compact space $\mathcal{K}(X^2)$ of all nonempty and compact subsets of $X^2$, we can find a subsequence $(\xi_{k^{(2)}_n})$ of $(\xi_{k_n^{(1)}})$ so that the sequence of supports $(\on{supp}(\xi_{k^{(2)}_n}))$ converges to some compact set $K\subset X^2$. Finally, since $I(X^2)$ is compact (w.r.t. the canonical topology $\tau_p$), we can find a subsequence $(\xi_{k_n})$ of $(\xi_{k_n^{(2)}})$ which converges to some $\xi\in I(X^2)$. Now we show that $\xi\in\Lambda_{\mu,\nu}$. Take any $\p\in C(X)$. Then we have
$$
\mu(\p)\leftarrow \mu_{n}(\p)=I(\pi_1)(\xi_{k_n})(\vp)=\xi_{k_n}(\p\circ\pi_1)\to \xi(\p\circ \pi_1)=I(\pi_1)(\xi)(\p)
$$
which shows that $I(\pi_1)(\xi)(\p)=\mu(\p)$. Since $\p$ was taken arbitrarily, we have $I(\pi_1)(\xi)=\mu$. Similarly we can show that $I(\pi_2)(\xi)=\nu$ and thus $\xi\in\Lambda_{\mu,\nu}$.\\
Now we observe that $\on{supp}(\xi)\subset K$. Suppose that it is not the case. Then there exists $s_0\in \on{supp}(\xi)\setminus K$. As $K$ is closed, we can find $\ve>0$ so that the closed ball $B(s_0,\ve)$ (w.r.t. some fixed metric $\rho$ on $X^2$; for later considerations, assume that $\rho$ is the maximum metric on $X^2$) is disjoint with $K$. By the Tietze extension theorem, we can find a continuous map $\p:X^2\to\R$ so that $\p(x)=0$ for $x\in K_{\ve/2}:=\{s\in X:\exists_{z\in K}\;\rho(s,z)\leq \frac{\ve}{2}\}$ and $\p(s_0)\geq 1-\eta(s_0)$, where $\eta$ is the density of $\xi$. Now we find $n_0\in\N$ so that for $n\geq n_0$, it holds $h(\on{supp}(\xi_{k_n}), K)<\frac{\ve}{2}$, where $h$ is the Hausdorrf metric. Then for $n\geq n_0$, we have $\on{supp}(\xi_{k_n})\subset K_{\ve/2}$ and
$$
\xi(\p)=\max\{\eta(x)+\p(x):x\in X\}\geq \eta(s_0)+\p(s_0)\geq 1
$$
and
$$
\xi_{k_n}(\p)=\max\{\eta_{k_n}(s)+\p(s):s\in X^2\}=$$ $$=\max\{\eta_{k_n}(s)+\p(s):s\in \on{supp}(\xi_{k_n})\}\leq 0
$$
where $\eta_{k_n}$ is the density of $\xi_{k_n}$. This leads to a contradiction with $\xi_{k_n}\to \xi$. Hence $\on{supp}(\xi)\subset K$.\\
Finally, take any $(x,y)\in \on{supp}(\xi)$ and take any $\ve>0$. Then find $n_0\in\N$ so that for $n\geq n_0$ we have $h(\on{supp}(\xi_{k_n}),K)<\ve$. As $(x,y)\in K$ and $\rho$ is the maximum metric on $X^2$, for any $n\geq n_0$ we can find $(x_n,y_n)\in \on{supp}(\xi_{k_n})$ so that $d(x,x_n),d(y,y_n)<\ve$. But then
$$d(x,y)\leq d(x,x_n)+d(x_n,y_n)+d(y_n,y)\leq \sup\{d(y,z):(y,z)\in\on{supp}(\xi_{k_n})\}+2\ve$$
and hence
$$
\sup\{d(x,y):(x,y)\in\on{supp}(\xi)\}\leq \lim_{n\to\infty}\sup\{d(x,y):(x,y)\in\on{supp}(\xi_{k_n})\}+2\ve.
$$
As $\ve>0$ was taken arbitrarily, and by the choice of $(k_n)$, we get
$$
\sup\{d(x,y):(x,y)\in\on{supp}(\xi)\}\leq \liminf_{n\to\infty}\sup\{d(x,y):(x,y)\in\on{supp}(\xi_{n})\}.
$$
\end{proof}
An immediate consequence of the Lemma \ref{newlemma1} is:
\begin{cor}\label{newcor1} The infimum occurring in the definition of metric $d_1$ is attained, that is, for any $\mu_1,\mu_2\in I(X)$, there exists $ \xi_0 \in \Lambda_{\mu_1,\mu_2}$ such that
$$
d_{1}\left(\mu_{1}, \mu_{2}\right):=\sup \{d(x, y):(x, y) \in \operatorname{supp} \xi_0\}
$$ where $\Lambda_{\mu_1,\mu_2}$ is the family of all idempotent measures $\xi\in I(X\times X)$ with $I\left(\pi_{i}\right)(\xi)=\mu_{i}, i=1,2$.
\end{cor}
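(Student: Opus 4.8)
The plan is to derive Corollary~\ref{newcor1} directly from Lemma~\ref{newlemma1} by a stationary-sequence argument. Fix $\mu_1,\mu_2\in I(X)$. By the definition of $d_1$ as an infimum, I can choose a minimizing sequence $(\xi_n)\subset\Lambda_{\mu_1,\mu_2}$ so that
\begin{equation*}
\sup\{d(x,y):(x,y)\in\on{supp}(\xi_n)\}\to d_1(\mu_1,\mu_2)\quad\text{as }n\to\infty.
\end{equation*}
The key observation is that each $\xi_n$ lies in $\Lambda_{\mu_1,\mu_2}$, which is exactly the set $\Lambda_{\mu,\nu}$ of the Lemma in the special case where the approximating sequences are \emph{constant}: setting $\mu_n:=\mu_1$ and $\nu_n:=\mu_2$ for all $n$, these are trivially $\tau_p$-convergent with limits $\mu=\mu_1$ and $\nu=\mu_2$, and the hypothesis $\xi_n\in\Lambda_{\mu_n,\nu_n}=\Lambda_{\mu_1,\mu_2}$ is satisfied.

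With this identification, Lemma~\ref{newlemma1} produces a measure $\xi_0\in\Lambda_{\mu_1,\mu_2}$ such that
\begin{equation*}
\sup\{d(x,y):(x,y)\in\on{supp}(\xi_0)\}\leq\liminf_{n\to\infty}\sup\{d(x,y):(x,y)\in\on{supp}(\xi_n)\}=d_1(\mu_1,\mu_2),
\end{equation*}
where the equality holds because the minimizing sequence was chosen so that the full sequence (hence its $\liminf$) converges to $d_1(\mu_1,\mu_2)$. This gives one inequality. For the reverse inequality, I would simply note that since $\xi_0\in\Lambda_{\mu_1,\mu_2}$, the quantity $\sup\{d(x,y):(x,y)\in\on{supp}(\xi_0)\}$ is one of the values over which the defining infimum of $d_1(\mu_1,\mu_2)$ is taken, so it is bounded below by that infimum. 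Combining the two inequalities yields $\sup\{d(x,y):(x,y)\in\on{supp}(\xi_0)\}=d_1(\mu_1,\mu_2)$, which is precisely the claimed attainment of the infimum.

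There is essentially no obstacle here: all the real work — the triple use of sequential compactness (of $\mathcal{K}(X^2)$ and of $I(X^2)$) together with the Tietze-based argument that $\on{supp}(\xi)\subset K$ — has already been carried out inside Lemma~\ref{newlemma1}, and this corollary is just the observation that the Lemma applies verbatim to constant sequences. The only point requiring any care is the bookkeeping of the two opposite inequalities so that the infimum is shown to be both a lower bound (definitional) and actually achieved (via the Lemma); I would state these explicitly to make the equality transparent.
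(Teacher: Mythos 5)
Your proof is correct and is exactly the intended argument: the paper states the corollary as "an immediate consequence" of Lemma~\ref{newlemma1} without writing out a proof, and your application of the lemma to constant sequences $\mu_n=\mu_1$, $\nu_n=\mu_2$ together with a minimizing sequence $(\xi_n)$, plus the trivial reverse inequality from the definition of the infimum, is precisely the derivation the authors have in mind.
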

Now we show that $d_1$ is complete:
\begin{lem}
The metric $d_1$ is complete.
\end{lem}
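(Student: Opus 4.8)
The plan is to show completeness of $d_1$ by taking an arbitrary Cauchy sequence $(\mu_n)\subset I(X)$ and producing a limit in $I(X)$ to which it converges in $d_1$. Since $d_1$ dominates Zaitov's metric $d_I$, which induces $\tau_p$, and since $I(X)$ is $\tau_p$-compact, the sequence $(\mu_n)$ has a $\tau_p$-convergent subsequence; let $\mu\in I(X)$ be its limit. The strategy is then to prove that $d_1(\mu_n,\mu)\to 0$, which by the standard Cauchy-plus-convergent-subsequence argument will force the whole sequence to converge to $\mu$ in $d_1$, establishing completeness.

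First I would extract a $\tau_p$-convergent subsequence $(\mu_{k_n})$ with $\mu_{k_n}\to\mu$. To bound $d_1(\mu_{k_n},\mu)$, for each $n$ I would invoke Corollary \ref{newcor1} to choose an optimal coupling $\xi_n\in\Lambda_{\mu_{k_n},\mu_{k_{n+1}}}$ (or more directly work with couplings of $\mu_{k_n}$ and $\mu$); the cleaner route is to apply Lemma \ref{newlemma1}. Specifically, I would fix indices and consider the constant sequence equal to $\mu$ in the second slot and the tail $(\mu_{k_m})_{m\geq n}$ in the first slot, together with couplings witnessing the Cauchy estimates. The key inequality to exploit is that for a Cauchy sequence, $d_1(\mu_{k_m},\mu_{k_n})$ is small for large $m,n$, and one wants to pass $m\to\infty$ inside the coupling.

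Concretely, I would argue as follows: given $\ve>0$, choose $N$ so that $d_1(\mu_{k_m},\mu_{k_n})<\ve$ for all $m,n\geq N$. Fix $n\geq N$. For each $m\geq N$ pick (via Corollary \ref{newcor1}) an optimal coupling $\xi_{m}\in\Lambda_{\mu_{k_n},\mu_{k_m}}$ with $\sup\{d(x,y):(x,y)\in\on{supp}(\xi_m)\}=d_1(\mu_{k_n},\mu_{k_m})<\ve$. The sequences $(\mu_{k_n})_m$ (constant in $m$) and $(\mu_{k_m})_m$ are $\tau_p$-convergent to $\mu_{k_n}$ and $\mu$ respectively, so Lemma \ref{newlemma1} yields a coupling $\xi\in\Lambda_{\mu_{k_n},\mu}$ with
$$
\sup\{d(x,y):(x,y)\in\on{supp}(\xi)\}\leq\liminf_{m\to\infty}\sup\{d(x,y):(x,y)\in\on{supp}(\xi_m)\}\leq\ve.
$$
Hence $d_1(\mu_{k_n},\mu)\leq\ve$ for all $n\geq N$, so $d_1(\mu_{k_n},\mu)\to 0$. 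Since $(\mu_n)$ is $d_1$-Cauchy and has a subsequence converging to $\mu$, the whole sequence converges to $\mu$ in $d_1$.

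I expect the main obstacle to be the bookkeeping around applying Lemma \ref{newlemma1} correctly: the lemma is stated for two sequences of measures with the couplings $\xi_m\in\Lambda_{\mu_m,\nu_m}$ matching the $m$-th terms, so I must set up a constant sequence $\nu_m=\mu_{k_n}$ in one coordinate and the varying tail in the other, and verify that the optimal couplings supplied by Corollary \ref{newcor1} genuinely lie in the required $\Lambda$-families. A secondary subtlety is confirming that $d_1$ is actually a metric (in particular that the triangle inequality holds and $d_1$ separates points), but since the excerpt already treats $d_1$ as a metric generating a topology finer than $\tau_p$, I would take that as given and focus solely on completeness via the subsequence argument above.
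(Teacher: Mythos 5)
Your proposal is correct and follows essentially the same route as the paper: extract a $\tau_p$-convergent subsequence by compactness, use Corollary \ref{newcor1} to pick optimal couplings realizing the Cauchy bounds, and apply Lemma \ref{newlemma1} with one slot held constant to transfer the estimate to the $\tau_p$-limit, concluding via the standard Cauchy-plus-convergent-subsequence argument. The only cosmetic difference is that the paper phrases the estimate with the explicit thresholds $\tfrac{1}{2^k}$ rather than an arbitrary $\ve$.
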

\begin{proof}
Take any $d_1$-Cauchy sequence $\mu_n$, $n\in\N$. Since $I(X)$ is compact w.r.t. the topology $\tau_p$, the sequence $(\mu_n)$ has a convergent subsequence. As Cauchy sequence is convergent iff some of its subsequence is convergent, WLOG we can assume that the sequence $(\mu_n)$ itself is convergent w.r.t. topology $\tau_p$. Let $\mu_0$ be its $\tau_p$-limit. Now take $n_1\in\N$ so that for $n\geq n_1$, it holds $d_1(\mu_{n_1},\mu_n)\leq\frac{1}{2}$. Now for every $n\geq n_1$, choose $\xi^1_n\in\Lambda_{\mu_{n_1},\mu_n}$ such that
 $$d_1(\mu_{n_1},\mu_n)=\sup\{d(x,y):(x,y)\in\on{supp}(\xi^1_n)\}.$$
 Such a choice is possible by Lemma \ref{newlemma1} (use it for $\mu_k=\mu:=\mu_{n_1},\;\nu_k=\nu:=\mu_n$, $k\in\N$, and appropriate sequence $(\xi_k)$ of measures from $\Lambda_{\mu_{n_1},\mu_n}$). Now, using Lemma \ref{newlemma1} again, but for $\mu_n=\mu:=\mu_{n_1}$, $\nu_n:=\mu_n$, $\nu:=\mu_0$ and $\xi_n$, $n\geq n_1$, we find a measure $\xi^1\in\Lambda_{\mu_{n_1},\mu_0}$ such that
$$
\sup\{d(x,y):(x,y)\in\on{supp}(\xi^1)\}\leq \liminf_{n\to\infty}\sup\{d(x,y):(x,y)\in\on{supp}(\xi^1_n)\}=$$ $$=\liminf_{n\to\infty}d_1(\mu_{n_1},\mu_n)\leq \frac{1}{2}.
$$
In particular, $d_1(\mu_{n_1},\mu_0)\leq\frac{1}{2}$. Using the same reasoning, we can find next values $n_1<n_2<n_3<...$ so that for every $k\in\N$, $d_1(\mu_{n_k},\mu_0)<\frac{1}{2^k}$. In particular, $(\mu_{n_k})$ is a convergent subsequence of $(\mu_n)$, and hence also the whole sequence $(\mu_n)$ converges. The result follows.
\end{proof}
Below we give a simple example that shows that $(I(X),d_1)$ need not be compact (in particular, it does not generate the topology $\tau_p$).
\begin{ex}{
Let $(X,d)$ be any compact space that has more than one element, and find distinct $x_0,y_0\in X$. For every $n\in\N$, let $\mu_n=\bigoplus_{x\in X}\lambda_n(x)\odot\delta_x$, where
$$\lambda_n(x)=\left\{\begin{array}{ccc}0&\mbox{if}&x=x_0\\-n&\mbox{if}&x=y_0\\-\infty&\mbox{if}&x\notin\{x_0,y_0\}\end{array}\right.$$
Now fix any $n\neq m$ and any $\xi=\bigoplus_{(x,y)\in X^2}\eta(x,y)\odot\delta_{(x,y)}\in\Lambda_{\mu_n,\mu_m}$. According to \cite[Proposition 3.1]{ZAI}, we have $\displaystyle \forall_{x\in X}\;\lambda_n(x)=\max\{\eta(x,y):y\in X\}$ and $\displaystyle\forall_{y\in X}\;\lambda_m(y)=\max\{\eta(x,y):x\in X\}.$ In particular, if $x\notin\{x_0,y_0\}$ or $y\notin\{x_0,y_0\}$, then $\eta(x,y)=-\infty$, and also:\\
$-n=\lambda_n(y_0)=\max\{\eta(y_0,x_0),\eta(y_0,y_0)\}$;\\
$-m=\lambda_m(y_0)=\max\{\eta(x_0,y_0),\eta(y_0,y_0)\}$.\\
Hence $\eta(x_0,y_0)>-\infty$ or $\eta(y_0,x_0)>-\infty$, so $(x_0,y_0)$ or $(y_0,x_0)$ belong to $\on{supp}(\xi)$. In particular, $d_1(\mu_n,\mu_m)\geq d(x_0,y_0)$ and $(\mu_n)$ has no $d_1$-convergent subsequence.
}\end{ex}

The following theorem gives an alternative version of the proof of the existence of invariant idempotent measure for Matkowski contractive max-plus normalized IFSs.

\begin{thm}\label{rho}
Assume that ${\mpS}=(X,(\phi_j)_{j=1}^L,(q_j)_{j=1}^L)$ is a Matkowski contractive max-plus {normalized IFS. Then the idempotent Markov operator $M_{\mathcal{S}}$ is Matkowski contractive w.r.t. $d_1$, with witness $\vp_\mS:=\max\{\vp_j:j=1,...,L\}$, where $\vp_j$s are witnesses for $\phi_j$s.

In particular, if $\mpS$ is Banach contractive, then $M_\mS$ is Banach contractive w.r.t. $d_1$ and $\on{Lip}(M_\mS)\leq\max\{\on{Lip}(\phi_j):j=1,...,L\}$.}
\end{thm}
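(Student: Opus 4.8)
The plan is to transport an optimal coupling of $\mu_1,\mu_2$ through the system by means of the ``diagonal'' product maps, mirroring the classical Wasserstein/Hutchinson argument. By Corollary~\ref{newcor1} I first fix $\xi_0\in\Lambda_{\mu_1,\mu_2}$ realizing the infimum, so that
$$
d_1(\mu_1,\mu_2)=\sup\{d(x,y):(x,y)\in\on{supp}(\xi_0)\}.
$$
For each $j$ define the continuous selfmap $\Phi_j:X^2\to X^2$ by $\Phi_j(x,y):=(\phi_j(x),\phi_j(y))$, and set
$$
\zeta:=\bigoplus_{j=1}^L q_j\odot I(\Phi_j)(\xi_0).
$$
Since $\zeta=M_{\mathcal{S}^2}(\xi_0)$ for the product IFS $\mathcal{S}^2=(X^2,(\Phi_j)_{j=1}^L,(q_j)_{j=1}^L)$, it is automatically a well-defined element of $I(X^2)$, so the only points needing verification are that $\zeta$ is a coupling of $M_\mS(\mu_1)$ and $M_\mS(\mu_2)$ and that its support is small.

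Second, I would check that $\zeta\in\Lambda_{M_\mS(\mu_1),M_\mS(\mu_2)}$. The key identity is $\pi_i\circ\Phi_j=\phi_j\circ\pi_i$ for $i=1,2$. Unwinding the definitions, for $\vp\in C(X)$,
$$
I(\pi_1)(\zeta)(\vp)=\zeta(\vp\circ\pi_1)=\max_j\{q_j+\xi_0(\vp\circ\pi_1\circ\Phi_j)\}=\max_j\{q_j+\xi_0((\vp\circ\phi_j)\circ\pi_1)\},
$$
and since $I(\pi_1)(\xi_0)=\mu_1$ the last expression equals $\max_j\{q_j+\mu_1(\vp\circ\phi_j)\}=M_\mS(\mu_1)(\vp)$; hence $I(\pi_1)(\zeta)=M_\mS(\mu_1)$, and symmetrically $I(\pi_2)(\zeta)=M_\mS(\mu_2)$.

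Third, I would identify the support. Using that for a continuous map on the compact space $X^2$ one has $\on{supp}(I(\Phi_j)(\xi_0))=\Phi_j(\on{supp}(\xi_0))$ (image of support equals support of pushforward), and that the density of a finite max-plus combination $\bigoplus_j q_j\odot\nu_j$ with finite weights $q_j$ is $\max_j(q_j+\lambda_{\nu_j})$, whence $\on{supp}(\zeta)=\bigcup_{j=1}^L\on{supp}(I(\Phi_j)(\xi_0))$, I get
$$
\on{supp}(\zeta)=\bigcup_{j=1}^L\Phi_j(\on{supp}(\xi_0)).
$$
Consequently, for every $(u,v)\in\on{supp}(\zeta)$ there are $j$ and $(x,y)\in\on{supp}(\xi_0)$ with $(u,v)=(\phi_j(x),\phi_j(y))$, so by the Matkowski condition~(\ref{1}) and monotonicity of $\vp_\mS=\max_j\vp_j$,
$$
d(u,v)=d(\phi_j(x),\phi_j(y))\leq\vp_j(d(x,y))\leq\vp_\mS(d(x,y))\leq\vp_\mS(d_1(\mu_1,\mu_2)),
$$
the last step because $d(x,y)\leq d_1(\mu_1,\mu_2)$ on $\on{supp}(\xi_0)$ and $\vp_\mS$ is nondecreasing. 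Taking the supremum over $\on{supp}(\zeta)$ and using $\zeta\in\Lambda_{M_\mS(\mu_1),M_\mS(\mu_2)}$ together with the definition of $d_1$ as an infimum yields $d_1(M_\mS(\mu_1),M_\mS(\mu_2))\leq\vp_\mS(d_1(\mu_1,\mu_2))$. The Banach case follows by taking $\vp_j(t)=\on{Lip}(\phi_j)\,t$, which gives $\vp_\mS(t)=\max_j\on{Lip}(\phi_j)\,t$ and hence the stated bound on $\on{Lip}(M_\mS)$.

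I expect the main obstacle to be bookkeeping rather than any deep idea: I must confirm that $\vp_\mS=\max_j\vp_j$ is genuinely a Matkowski witness, i.e.\ that it is nondecreasing (immediate) and that $\vp_\mS^{(n)}(t)\to0$ for every $t>0$ (the finite maximum of witnesses is again a witness, a known but slightly delicate fact resting on each $\vp_j(t)<t$ for $t>0$). The remaining care goes into the two support lemmas --- that pushforward commutes with taking supports and that the support of a finite max-plus combination with finite weights is the union of the supports --- both of which follow from the usc density description in~(\ref{fil1}) and compactness of $X$.
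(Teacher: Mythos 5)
Your proposal is correct and follows essentially the same route as the paper's proof: fix an optimal coupling via Corollary~\ref{newcor1}, push it through the diagonal product system $(\phi_j\times\phi_j)_j$ on $X^2$, verify it couples $M_\mS(\mu_1)$ and $M_\mS(\mu_2)$, use the support formula for the pushforward, and bound the diameter of the resulting support by $\vp_\mS(d_1(\mu_1,\mu_2))$. The only cosmetic difference is that you check the coupling property directly on test functions via $\pi_i\circ\Phi_j=\phi_j\circ\pi_i$ while the paper computes densities; the point you flag about $\max_j\vp_j$ being a Matkowski witness is left at the same level of detail in the paper itself.
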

\begin{proof}
Let $$\mu_1=\bigoplus_{x\in X}\lambda_1(x)\odot\delta_x,\;\;\;\mu_2=\bigoplus_{x\in X}\lambda_2(x)\odot\delta_x\in I(X).$$ Find
$$\xi=\bigoplus_{(x,y)\in X\times X}\eta(x,y)\odot\delta_{(x,y)}\in \Lambda_{\mu_1,\mu_2}
$$
so that
$$
d_1(\mu_1,\mu_2)=\sup\{d(x,y):(x,y)\in\on{supp}(\xi)\}.
$$
The existence of the measure $\xi$ follows from Corollary \ref{newcor1}.\\
By \cite[Proposition 3.1]{ZAI}, we have
\begin{equation}\label{abcd1}
\forall_{x\in X}\;\mu_1(x)=\max\{\eta(x,y):y\in X\}\;\mbox{and}\;\forall_{y\in X}\;\mu_2(y)=\max\{\eta(x,y):x\in X\}.
\end{equation}
Now let $\mu_i^\mS=\bigoplus_{s\in X}\lambda_i^{\mS}\odot\delta_s:=M_\mS(\mu_i)$ for $i=1,2$. Then by \cite[Lemma 5.5]{COS}, we have
\begin{equation}\label{abcd3}
\lambda_i^\mS(s)=\max\{q_j+\lambda_i(x):j=1,...,L,\;x\in \phi_j^{-1}(s)\}.
\end{equation}
Now define $$\overline{\mS}:=(X\times X,(\overline{\phi}_j)_{j=1}^L,(q_j)_{j=1}^L)$$ where $$\overline{\phi}_j(x,y):=(\phi_j(x),\phi_j(y)),\;\mbox{for}\;(x,y)\in X\times X.$$
Considering the maximum metric $d_m$ on $X\times X$, we have for every $j=1,...,L$ and $(x,y),(x',y')\in X\times X$,
$$
d_m(\overline{\phi}_j(x,y),\overline{\phi}_j(x',y'))=d_m((\phi_j(x),\phi_j(y)),(\phi_j(x'),\phi_j(y')))=$$ $$=
\max\{d(\phi_j(x),\phi_j(x')),d(\phi_j(y),\phi_j(y'))\}\leq$$ $$\leq \max\{\p(d(x,x')),\p(d(y,y'))\}\leq \p(\max\{d(x,x'),d(y,y')\})=$$ $$=\p(d_m((x,y),(x',y'))).
$$
Hence $\overline{\mS}$ consists of $\p$-contractions. Now let
$$
\xi_{\overline{\mS}}=\bigoplus_{(z,s)\in X\times X}\eta_{\overline{\mS}}(z,s)\odot\delta_{(z,s)}:=M_{\overline{\mS}}(\xi).
$$
Then by \cite[Lemma 5.5]{COS}, we have that the density
$$
\eta_{\overline{\mS}}(s,t)=\max\{q_j+\eta(x,y):j=1,...,L,\;(x,y)\in\overline{\phi}_j^{-1}(s,t)\}=$$ $$=
\max\{q_j+\eta(x,y):j=1,...,L,\;x\in\phi_j^{-1}(s),\;y\in\phi_j^{-1}(t)\}.
$$
Now we show that $\xi_{\overline{\mS}}\in\Lambda_{\mu_1^\mS,\mu_2^\mS}$. By \cite[Lemma 2.6]{COS}, the density of $I(\pi_1)(\xi_{\overline{\mS}})$ at $s\in X$ equals
$$
\max\{\eta_{\overline{\mS}}(x,y):(x,y)\in\pi_{1}^{-1}(z)\}=\max\{\eta_{\overline{\mS}}(s,t):t\in X\}=
$$
$$
=\max\{\max\{q_j+\eta(x,y):j=1,...,L,\;x\in\phi_j^{-1}(s),\;y\in\phi_j^{-1}(t)\}:t\in X\}=
$$
$$
=\max\{q_j+\eta(x,y):j=1,...,L,\;x\in\phi_j^{-1}(s),\;y\in X\}=
$$
$$
=\max\{q_j+\max\{\eta(x,y):y\in X\}:j=1,...,L,\;x\in\phi_j^{-1}(s)\}\overset{(\ref{abcd1})}{=}
$$
$$
=\max\{q_j+\lambda_1(x):j=1,...,L,\;x\in\phi_j^{-1}(s)\}\overset{(\ref{abcd3})}{=}\lambda^{\mS}_i(s)
$$
Hence $I(\pi_1)(\xi_{\overline{\mS}})=\mu_1^{\mS}$. Similarly we prove that $I(\pi_2)(\xi_{\overline{\mS}})=\mu_2^{\mS}$. Hence $\xi_{\overline{\mS}}\in\Lambda_{\mu_1^\mS,\mu_2^\mS}$. By \cite[Lemma 5.5]{COS}, it holds
$$
\on{supp}(\xi_{\overline{\mS}})=\on{supp}(M_{\overline{\mS}}(\xi))=\bigcup_{j=1}^L\overline{\phi}_j(\on{supp}(\xi)),
$$
so we have
$$
d_1(M_\mS(\mu_1),M_{\mS}(\mu_2))\leq \sup\{d(x,y):(x,y)\in \on{supp}(\xi_{\overline{\mS}})\}=
$$
$$
=\sup\{d(x,y):(x,y)\in \bigcup_{j=1}^L\overline{\phi}_j(\on{supp}(\xi))\}=
$$
$$
=\max\{\sup\{d(x,y):(x,y)\in \overline{\phi}_j(\on{supp}(\xi))\}:j=1,...,L\}=
$$
$$
=\max\{\sup\{d(\overline{\phi}_j(s,t)):(s,t)\in \on{supp}(\xi)\}:j=1,...,L\}=
$$
$$
=\max\{\sup\{d({\phi}_j(s),\phi_j(t)):(s,t)\in \on{supp}(\xi)\}:j=1,...,L\}\leq
$$
$$
\leq \max\{\sup\{\p(d(s,t)):(s,t)\in \on{supp}(\xi)\}:j=1,...,L\}=
$$
$$
=\sup\{\p(d(s,t)):(s,t)\in \on{supp}(\xi)\}\leq $$ $$\leq \p(\sup\{d(s,t):(s,t)\in \on{supp}(\xi)\})=\p(d_1(\mu_1,\mu_2)).
$$
All in all, $M_\mS$ is $\p$-contraction.
\end{proof}

\section{Contractivity w.r.t. a version of Bazylevych-Repov\v{s}-Zarichnyi's $\tilde{d}$ metric}
Again, throughout the rest of this section, we assume that $(X,d)$ is a compact metric space.\\
For each $a>0$ and $\mu,\nu\in I(X)$, define
$$
d_a(\mu,\nu)=\sup\{|\mu(\phi)-\nu(\phi)|:\phi\in\on{Lip}_a(X)\}
$$
where $\on{Lip}_a(X)$ is the family of maps $\phi:X\to\R$ with $\on{Lip}(\phi)\leq a$. In \cite[Theorem 4.1]{Rep10}, the authors prove that $d_a$  are continuous pseudometrics for each $a\in\N$, and that $\tilde{d}$ defined by
$$
\tilde{d}(\mu,\nu):=\sum_{n=1}^\infty\frac{d_n(\mu,\nu)}{n\cdot 2^n},\;\;\mu,\nu\in I(X)
$$
is a metric on $I(X)$ that generates the canonical topology $\tau_p$. We will show that idempotent Markov operator for Banach contractive max-plus normalized IFS is a Banach contraction w.r.t. some natural modification of $\tilde{d}$.\\
For $\alpha,q\in(0,1)$, define $\tilde{d}_{a,q}$ by
$$
\tilde{d}_{\alpha,q}(\mu,\nu):=\sum_{n\in\mathbb{Z}}\frac{q^{|n|}}{\alpha^n}d_{\alpha^n}(\mu,\nu)
$$
In a similar way as in \cite{Rep10} we can show that $\tilde{d}_{\alpha,q}$ is a metric that generates the topology $\tau_p$ (we just have to observe that $\tilde{d}_{\alpha,q}$ is well defined and that the family $d_{\alpha^n}$, $n\in\Z$ is a family of continuous pseudometrics that separates points). Note that $(X,d_{\alpha^n})$ is compact as it generates the compact topology $\tau_p$. In particular,  $(X,d_{\alpha^n})$ is complete.

The following theorem give an alternative version of the proof of the existence of invariant idempotent measure for Banach contractive max-plus normalized IFSs.

\begin{thm}
Assume that ${\mpS}=(X,(\phi_j)_{j=1}^L,(q_j)_{j=1}^L)$ is a Banach contractive max-plus {normalized IFS. Let
$$
\alpha:=\max\{\on{Lip}(\phi_j):j=1,...,L\}
$$
and choose $q\in(\alpha,1)$. Then $M_\mS$ is Banach contraction w.r.t. $d_{\alpha,q}$. More precisely,
$$
\on{Lip}(M_\mS)\leq \frac{\alpha}{q}.
$$}
\end{thm}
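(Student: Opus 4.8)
The plan is to reduce everything to a single ``scale-shift'' estimate for the pseudometrics $d_a$, and then feed it into the two-sided series defining $\tilde{d}_{\alpha,q}$. First I would record the elementary formula
$$M_\mS(\mu)(\psi)=\max_{1\le j\le L}\{q_j+\mu(\psi\circ\phi_j)\},\qquad \psi\in C(X),$$
which follows directly from the definition $M_\mS(\mu)=\bigoplus_{j=1}^L q_j\odot I(\phi_j)(\mu)$ once one reads the max-plus operations on the level of functionals, i.e. $(q\odot\nu)(\psi)=q+\nu(\psi)$ and $(\nu_1\oplus\nu_2)(\psi)=\max\{\nu_1(\psi),\nu_2(\psi)\}$, together with $I(\phi_j)(\mu)(\psi)=\mu(\psi\circ\phi_j)$. (Here I write $\psi$ for the test functions to avoid clashing with the maps $\phi_j$.)

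The key step is the inequality $d_a(M_\mS(\mu),M_\mS(\nu))\le d_{\alpha a}(\mu,\nu)$, valid for every $a>0$ and $\mu,\nu\in I(X)$. To prove it I would fix $\psi\in\on{Lip}_a(X)$ and combine the formula above with the elementary bound $|\max_j s_j-\max_j t_j|\le\max_j|s_j-t_j|$, obtaining $|M_\mS(\mu)(\psi)-M_\mS(\nu)(\psi)|\le\max_j|\mu(\psi\circ\phi_j)-\nu(\psi\circ\phi_j)|$. Since Lipschitz constants are submultiplicative under composition, $\on{Lip}(\psi\circ\phi_j)\le\on{Lip}(\psi)\on{Lip}(\phi_j)\le a\alpha$, so each $\psi\circ\phi_j$ lies in $\on{Lip}_{\alpha a}(X)$ and each term is bounded by $d_{\alpha a}(\mu,\nu)$; taking the supremum over $\psi\in\on{Lip}_a(X)$ yields the claim. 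Specializing to $a=\alpha^n$ gives $d_{\alpha^n}(M_\mS(\mu),M_\mS(\nu))\le d_{\alpha^{n+1}}(\mu,\nu)$ for every $n\in\Z$.

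It then remains to assemble the series. Substituting this estimate term-by-term into the definition of $\tilde{d}_{\alpha,q}$ and reindexing by $m=n+1$,
$$\tilde{d}_{\alpha,q}(M_\mS(\mu),M_\mS(\nu))\le\sum_{n\in\Z}\frac{q^{|n|}}{\alpha^n}d_{\alpha^{n+1}}(\mu,\nu)=\alpha\sum_{m\in\Z}\frac{q^{|m-1|}}{\alpha^{m}}d_{\alpha^{m}}(\mu,\nu).$$
The only remaining point is to compare the weights $q^{|m-1|}$ and $q^{|m|}$. The triangle inequality $|m|\le|m-1|+1$ gives $|m-1|\ge|m|-1$, and since $q\in(0,1)$ this yields $q^{|m-1|}\le q^{|m|-1}=q^{-1}q^{|m|}$. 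Inserting this bound shows the right-hand side is at most $\frac{\alpha}{q}\sum_{m\in\Z}\frac{q^{|m|}}{\alpha^m}d_{\alpha^m}(\mu,\nu)=\frac{\alpha}{q}\tilde{d}_{\alpha,q}(\mu,\nu)$, which is exactly the asserted estimate.

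I expect the only genuinely delicate point to be the bookkeeping with the two-sided series: the shift $n\mapsto n+1$ must be carried out cleanly, and the weight comparison $q^{|m-1|}\le q^{|m|-1}$ hinges on $|m|\le|m-1|+1$ together with $q<1$. The scale-shift inequality itself is robust, resting only on the max-plus structure of $M_\mS$ and the submultiplicativity of Lipschitz constants. Notably, the hypothesis $q>\alpha$ plays no role in any of these estimates; it enters only at the very end, ensuring that the constant $\alpha/q$ is strictly below $1$, so that $M_\mS$ is a genuine Banach contraction. (Convergence of all series is not an issue, since $\tilde{d}_{\alpha,q}$ has already been shown to be well defined.)
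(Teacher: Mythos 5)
Your proposal is correct and follows essentially the same route as the paper: the heart of both arguments is the scale-shift estimate $d_{\alpha^n}(M_\mS(\mu),M_\mS(\nu))\leq d_{\alpha^{n+1}}(\mu,\nu)$, obtained from the fact that composition with the $\phi_j$ multiplies Lipschitz constants by at most $\alpha$, followed by the same reindexing of the two-sided series and the weight comparison coming from $|n+1|\leq|n|+1$. The only cosmetic difference is that the paper packages the $L$ test functions into the single function $\p_\mS=\max_j\{q_j+\p\circ\phi_j\}$ (via the density formula of \cite[Lemma 5.5]{COS}) whereas you keep them separate and use $|\max_j s_j-\max_j t_j|\leq\max_j|s_j-t_j|$; both yield the identical inequality.
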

\begin{proof}
Take any $\mu=\bigoplus_{x\in X}\lambda(x)\odot\delta_x\in I(X)$ and a continuous map $\p:X\to\R$. By \cite[Lemma 5.5]{COS}, we have
$$
M_\mS(\mu)(\p)=\max\{\lambda_\mS(y)+\p(y):y\in X\}=$$ $$=\max\{\max\{q_j+\lambda(x):j=1,..,L,\;x\in\phi_j^{-1}(y)\}:y\in X\}=
$$
$$
=\max\{q_j+\lambda(x)+\p(\phi_j(x)):j=1,...,L,\;x\in X\}=$$ $$=\max\{\lambda(x)+\max\{q_j+\p\circ\phi_j(x):j=1,...,L\}:x\in X\}=\mu(\p_\mS)
$$
for $\p_\mS:=\max\{q_j+\p\circ\phi_j:j=1,...,L\}$. It is easy to see that if $\p$ is Lipschitz, then so is $\p_\mS$ and $\on{Lip}(\p_\mS)\leq \alpha\cdot\on{Lip}(\p)$. Hence, choosing $\mu_1,\mu_2\in I(X)$ and $\p\in \on{Lip}_{\alpha^n}(X)$, we have that $\p_\mS\in\on{Lip}_{\alpha^{n+1}}(X)$ and thus
$$
|M_\mS(\mu_1)(\p)-M_\mS(\mu_2)(\p)|=|\mu_1(\p_\mS)-\mu_2(\p_\mS)|\leq d_{\alpha^{n+1}}(\mu_1,\mu_2).
$$
Since $\p$ was chosen arbitrarily, we have
$$
d_{\alpha^n}(M_\mS(\mu_1),M_\mS(\mu_2))\leq d_{\alpha^{n+1}}(\mu_1,\mu_2)
$$
and
$$
\tilde{d}_{\alpha,q}(M_\mS(\mu_1),M_\mS(\mu_2))=\sum_{n\in\Z}\frac{q^{|n|}}{\alpha^n}d_{\alpha^n}(M_\mS(\mu_1),M_\mS(\mu_2))\leq
$$
$$
\leq \sum_{n\in\Z}\frac{q^{|n|}}{\alpha^{n}}d_{\alpha^{n+1}}(\mu_1,\mu_2)=
\sum_{n\in\Z}\frac{q^{|n|}\cdot \alpha}{q^{|n+1|}}\cdot\frac{q^{|n+1|}}{\alpha^{n+1}}d_{\alpha^{n+1}}(\mu_1,\mu_2)\leq
$$
$$
\leq \frac{\alpha}{q}\sum_{n\in\Z}\frac{q^{|n|}}{\alpha^{n}}d_{\alpha^{n}}(\mu_1,\mu_2)=
\frac{\alpha}{q}\tilde{d}_{\alpha,q}(\mu_1,\mu_2).
$$

\end{proof}$\;$\\
%\textbf{Acknowledgments}\\
%We would like to thank the referee for a careful reading of the paper, especially for %pointing out several  issues that were not sufficiently clear in a first version of the %paper.

% ------------------------------------------------------------------------
\end{document}